\newtheorem{theorem}{Theorem}\numberwithin{theorem}{section}
\newtheorem{prop}[theorem]{Proposition}
\newtheorem{lemma}[theorem]{Lemma}
\theoremstyle{remark}
\newtheorem{remark}[theorem]{Remark}
\numberwithin{equation}{section}
\def\at{\widetilde{a}}
\def\bt{\widetilde{b}}
\def\bb{B}
\def\ath{\mathrm{A}}
\def\C{\mathbb{C}}
\def\cc{C}
\def\kk{k_1,\ldots,k_r}
\long\def\comment#1{}
\def\Z{\mathbb{Z}}
\newcommand{\st}[2]{\left\{\begin{matrix} {#1}\\{#2} \end{matrix}\right\}}
\DeclareSymbolFont{cyss}{OT2}{wncyss}{m}{n}
\DeclareMathSymbol{\sh}{\mathbin}{cyss}{`x}
\markboth{\today}{\today}
\title{On poly-cosecant numbers}
\author[M. Kaneko]{Masanobu Kaneko}
\author[M. Pallewatta]{Maneka Pallewatta}
\author[H. Tsumura]{Hirofumi Tsumura}
\address{M.\,Kaneko: Faculty of Mathematics, Kyushu University, Motooka 744, Nishi-ku, Fukuoka 819-0395, Japan}
\email{mkaneko@math.kyushu-u.ac.jp}
\address{M.\,Pallewatta: Graduate School of Mathematics, Kyushu University, Motooka 744, Nishi-ku, Fukuoka 819-0395, Japan}
\email{maneka.osh@gmail.com}
\address{{H.\,Tsumura:} Department of Mathematical Sciences, Tokyo Metropolitan University, 1-1, Minami-Ohsawa, Hachioji, Tokyo 192-0397, Japan}
\email{tsumura@tmu.ac.jp}
\subjclass[2010]{Primary 11B68, Secondary 11M32, 11M99}
\keywords{Poly-Bernoulli number, multiple zeta value, multiple zeta function, polylogarithm}
\begin{document}

\begin{abstract}
We introduce and study a ``level two'' generalization of the poly-Bernoulli numbers,
which may also be regarded as a generalization of the cosecant numbers.
We prove a recurrence relation, two exact formulas, and a duality relation for negative
upper-index numbers.
\end{abstract}

\maketitle

\section{Introduction} \label{sec-1}

Poly-Bernoulli numbers were first introduced in \cite{Kaneko1997} and later a slightly 
modified version was studied in \cite{AK1999}. They are, denoted $B_n^{(k)}$ and $C_n^{(k)}$ respectively, 
defined by using generating series, as follows. 
For an integer $k\in \mathbb{Z}$, 
let $\{\bb_n^{(k)}\}$ and $\{\cc_n^{(k)}\}$ be the sequences of rational numbers given respectively by
\begin{align}
&\frac{{\rm Li}_{k}(1-e^{-t})}{1-e^{-t}}=\sum_{n=0}^\infty \bb_n^{(k)}\frac{t^n}{n!} \label{eq-1-1}\\
\intertext{and}
&\frac{{\rm Li}_{k}(1-e^{-t})}{e^t-1}=\sum_{n=0}^\infty \cc_n^{(k)}\frac{t^n}{n!},  \label{eq-1-2}
\end{align}
where ${\rm Li}_{k}(z)$ is the polylogarithm function (or rational function when $k\le0$) defined by
\begin{equation}
{\rm Li}_{k}(z)=\sum_{m=1}^\infty \frac{z^m}{m^k}\quad (|z|<1). \label{eq-1-3}
\end{equation}
In the sequel, we regard this or any other series only as a formal power series.

Since ${\rm Li}_1(z)=-\log(1-z)$, the generating functions on the left-hand sides of \eqref{eq-1-1}
and \eqref{eq-1-2}   when $k=1$ become 
\[ \frac{te^t}{e^t-1} \quad \text{and} \quad \frac{t}{e^t-1} \] respectively, and hence 
$B_n^{(1)}$ and $C_n^{(1)}$
are usual Bernoulli numbers, the only difference being $B_1^{(1)}=1/2$ and $C_1^{(1)}=-1/2$
and otherwise $B_n^{(1)}=C_n^{(1)}$. 

Various properties of poly-Bernoulli numbers, including combinatorial applications, are known.
Among them we mention the explicit formulas
\[ B_n^{(k)} =(-1)^n\sum_{i=0}^n\frac{(-1)^ii!\st ni}{(i+1)^k},\quad
C_n^{(k)}=(-1)^n\sum_{i=0}^n\frac{(-1)^ii!{\st{n+1}{i+1}}}{(i+1)^k}\] 
for $k\in\Z,\,n\in \mathbb{Z}_{\geq 0}$ using the Stirling numbers of the second kind, and the dualities
\begin{align}
& B_n^{(-k)}=B_{k}^{(-n)}, \label{eq-1-4}\\
& C_n^{(-k-1)}=C_{k}^{(-n-1)} \label{eq-1-5}
\end{align}
for $k,n\in \mathbb{Z}_{\geq 0}$ 
(see \cite[Theorems\ 1\ and\ 2]{Kaneko1997} and \cite[\S\,2]{Kaneko-Mem}). For combinatorial
applications, see~\cite{Ben2017}.

In this paper, we study the following ``level $2$'' analog of poly-Bernoulli numbers, denoted~$D_n^{(k)}$,
which we also call the poly-cosecant numbers. For each $k\in\Z$, define $D_n^{(k)}$ by
\begin{equation}
\frac{\ath_k(\tanh(t/2))}{\sinh t}=\sum_{n=0}^\infty D_n^{(k)}\frac{t^n}{n!}, \label{eq-1-6}
\end{equation}
where $\ath_k(z)$ is the series 
\begin{equation}
\ath_k(z)=2\sum_{n=0}^\infty \frac{z^{2n+1}}{(2n+1)^k} \label{eq-1-7}
\end{equation}
and $\tanh(z)$ and $\sinh(z)$ are the usual hyperbolic tangent and sine functions respectively.
Since $\ath_k(z)$, $\tanh(z)$ and $\sinh(z)$ are all odd functions, we immediately see that 
$D_{2n+1}^{(k)}=0$ for all $n\in \mathbb{Z}_{\geq 0}$.
Note that $\ath_1(z)=2\tanh^{-1}(z)$, and thus 
\[ \sum_{n=0}^\infty D_n^{(1)}\frac{t^n}{n!} = \frac{t}{\sinh t} = \frac{i t}{\sin(i t)}\quad(i=\sqrt{-1} ).  \]
Hence, up to sign, $D_n^{(1)}$ is the cosecant number $D_n$ (see N\"orlund~\cite[p.~458]{Noe}).
 
We should mention that our $D_n^{(k)}$ is (if slightly modified) a special case of a generalization
of the poly-Bernoulli number introduced by Y.~Sasaki in \cite[Definition 5]{Sasaki2012}.
 
\section{Recurrence and explicit formulas for poly-cosecant numbers}

In this section, we  obtain a recurrence and explicit formulas for poly-cosecant numbers. 

We first give a recurrence.  Note that $D_0^{(0)}=1$ and $D_n^{(0)}=0$ for all $n\ge1$ 
because $\ath_0(\tanh(t/2))=\sinh(t)$.
Starting from this, the following formula gives a way to compute ${D}_{n}^{(k)}$ recursively
for any integer $k$.

\begin{prop}\label{Pr-3-1} For any integer $ k$  and $n\ge 0 $, it holds
\begin{equation*}
{D}_{n}^{(k-1)}=\sum_{m=0}^{\lfloor\frac{n}{2}\rfloor}\binom{n+1}{2m+1} {D}_{n-2m}^{(k)}.
\end{equation*}
\end{prop}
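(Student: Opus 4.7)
The plan is to derive the recurrence by finding a differential/integral relation between the generating functions for $k$ and $k-1$, and then extracting the identity by comparing Taylor coefficients.

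The key observation is that the series $\ath_k(z)=2\sum_{n\ge 0}z^{2n+1}/(2n+1)^k$ satisfies $z\,\ath_k'(z)=\ath_{k-1}(z)$. Composing with $u=\tanh(t/2)$, I would apply the chain rule using $\frac{d}{dt}\tanh(t/2)=\tfrac12(1-\tanh^2(t/2))$, and then use the half-angle identity
\[
\sinh t=\frac{2\tanh(t/2)}{1-\tanh^2(t/2)},
\]
which rewrites as $\frac{1-u^2}{2u}=\frac{1}{\sinh t}$ with $u=\tanh(t/2)$. Combining these, I expect to obtain the clean relation
\[
\frac{d}{dt}\,\ath_k(\tanh(t/2))=\frac{\ath_{k-1}(\tanh(t/2))}{\sinh t}=\sum_{n=0}^\infty D_n^{(k-1)}\frac{t^n}{n!}.
\]

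Since $\ath_k(\tanh(t/2))$ vanishes at $t=0$, integrating from $0$ to $t$ yields
\[
\sinh(t)\sum_{n=0}^\infty D_n^{(k)}\frac{t^n}{n!}=\ath_k(\tanh(t/2))=\sum_{n=0}^\infty D_n^{(k-1)}\frac{t^{n+1}}{(n+1)!}.
\]
Expanding $\sinh t=\sum_{m\ge 0}t^{2m+1}/(2m+1)!$ as a Cauchy product on the left and equating the coefficient of $t^{n+1}$ on both sides then gives
\[
\frac{D_n^{(k-1)}}{(n+1)!}=\sum_{m=0}^{\lfloor n/2\rfloor}\frac{D_{n-2m}^{(k)}}{(2m+1)!(n-2m)!},
\]
which, after multiplying through by $(n+1)!$, is exactly the stated recurrence.

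The main conceptual point is recognizing the operator $z\frac{d}{dz}$ on $\ath_k$; the only computation requiring a touch of care is verifying the half-angle identity $(1-\tanh^2(t/2))/(2\tanh(t/2))=1/\sinh t$ so that the Jacobian from $u=\tanh(t/2)$ to $t$ converts $\ath_{k-1}(u)/u$ into $\ath_{k-1}(\tanh(t/2))/\sinh t$. After that, the recurrence is a routine coefficient comparison, and no case distinction by parity of $n$ is needed because $D_{2n+1}^{(k)}=0$ automatically makes the odd terms on the right vanish.
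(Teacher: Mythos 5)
Your proof is correct and rests on the same key identity as the paper's, namely $\frac{d}{dt}\,\ath_k(\tanh(t/2))=\ath_{k-1}(\tanh(t/2))/\sinh t$, followed by a coefficient comparison. The only difference is organizational: the paper differentiates the product $\sinh t\cdot\sum_n D_n^{(k)}t^n/n!$ and then has to combine $\binom{n}{2m}+\binom{n}{2m+1}=\binom{n+1}{2m+1}$, whereas you integrate the $D^{(k-1)}$ series and read off $\binom{n+1}{2m+1}$ directly from a single Cauchy product against $\sinh t$ --- a slightly cleaner bookkeeping of the same computation.
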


\begin{proof}
We differentiate the defining relation 
\begin{equation*}
\ath_k(\tanh(t/2))=\sinh t\sum_{n=0}^\infty  {D}_n^{(k)}\frac{t^n}{n!}
\end{equation*}
to obtain
\begin{align*}
\frac{ \ath_{k-1}(\tanh(t/2))}{\sinh t}&=\cosh t\sum_{n=0}^\infty {D}_n^{(k)}\frac{t^n}{n!}
+\sinh t\sum_{n=1}^\infty {D}_n^{(k)}\frac{t^{n-1}}{(n-1)!}. 
\end{align*}
From this we have
\begin{align*}
\sum_{n=0}^\infty {D}_n^{(k-1)}\frac{t^n}{n!}&=\sum_{m=0}^\infty\frac{t^{2m}}{(2m)!} 
\sum_{n=0}^\infty {D}_n^{(k)}\frac{t^n}{n!}+\sum_{m=0}^\infty\frac{t^{2m+1}}{(2m+1)!} 
\sum_{n=1}^\infty {D}_n^{(k)}\frac{t^{n-1}}{(n-1)!} \\
&=\sum_{n=0}^\infty\sum_{m=0} ^{\lfloor\frac{n}{2}\rfloor} {D}_{n-2m}^{(k)}\frac{t^{n}}{(2m)!(n-2m)!}
+\sum_{n=1}^\infty\sum_{m=0} ^{\lfloor\frac{n}{2}\rfloor} {D}_{n-2m}^{(k)}
\frac{t^{n}}{(2m+1)!(n-2m-1)!}\\
&=\sum_{n=0}^\infty\sum_{m=0} ^{\lfloor{\frac{n}{2}}\rfloor} \binom{n}{2m} {D}_{n-2m}^{(k)}
\frac{t^{n}}{n!}+\sum_{n=1}^\infty\sum_{m=0} ^{\lfloor{\frac{n}{2}}\rfloor} 
\binom{n}{2m+1} {D}_{n-2m}^{(k)}\frac{t^{n}}{n!}\\
&=\sum_{n=0}^\infty\sum_{m=0} ^{\lfloor{\frac{n}{2}}\rfloor} 
\binom{n+1}{2m+1} {D}_{n-2m}^{(k)}\frac{t^{n}}{n!}.
\end{align*}

By equating the coefficients of ${t^{n}}/{n!}$ on both sides, we obtain the desired result.
\end{proof}

When $k>0$, we may want to write this as
\[ (n+1) D_n^{(k)}=D_n^{(k-1)}-\sum_{m=1}^{\lfloor\frac{n}{2}\rfloor}\binom{n+1}{2m+1} {D}_{n-2m}^{(k)}
\quad (n>0). \]
Note that $D_0^{(k)}=1$ for all $k\in\Z$.

We proceed to give two explicit formulas for ${D}_{n}^{(k)}$.  Recall that $\begin{bmatrix}
n\\ m \end{bmatrix} $ and $\st{n}{m}$ are Stirling numbers of the first and the second kinds,
respectively, and $B_n=B_n^{(1)}$ is the Bernoulli number.  See \cite[Chapter~2]{AIK2014} for the
precise definition and formulas we use in the proof.  In~\cite{Sasaki2012}, Sasaki gave a different 
formula, but one needs to define yet another sequences to describe the formula.

\begin{theorem}\label{Th-3-1} 
For any $k\in \mathbb{Z}$ and $ n\ge 0$, we have

1)  \begin{align*}
       {D}_{n}^{(k)}&=4\sum_{m=0}^{\lfloor\frac{n}{2}\rfloor}\frac{1}{(2m+1)^{k+1}}\sum_{p=1}^{2m+1}
       \sum_{q=0}^{n-2m}(2^{p+q+1}-1)\binom{n}{q}\begin{bmatrix}
2m+1\\
p
\end{bmatrix} \st{n-q}{2m} \frac{B_{p+q+1}}{p+q+1},
\end{align*}

and

2) 
 \begin{align*}
  {D}_{n}^{(k)}=\sum_{m=0}^{\lfloor\frac{n}{2}\rfloor}\frac{1}{(2m+1)^{k+1}}
  \sum_{p=2m}^{n}\frac{(-1)^{p} (p+1)!}{2^{p}}\binom{p}{2m}\st{n+1}{p+1}.
\end{align*}
\end{theorem}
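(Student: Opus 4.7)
The starting point for both formulas is the observation that $\ath_{k+1}(\tanh(t/2))$ is an antiderivative of the generating function for the $D_n^{(k)}$. Indeed, from the series definition of $\ath_k$ one checks $z\,\ath_{k+1}'(z)=\ath_k(z)$, and the chain rule then gives
\[ \frac{d}{dt}\ath_{k+1}(\tanh(t/2))=\frac{\ath_k(\tanh(t/2))}{\tanh(t/2)\cdot 2\cosh^2(t/2)}=\frac{\ath_k(\tanh(t/2))}{\sinh t}, \]
so $\ath_{k+1}(\tanh(t/2))=\sum_{n\ge 0}D_n^{(k)}t^{n+1}/(n+1)!$. It therefore suffices to expand $2\sum_{m\ge 0}\tanh(t/2)^{2m+1}/(2m+1)^{k+1}$ as a power series in $t$ in two different ways, and in each case read off the coefficient of $t^{n+1}/(n+1)!$.

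For formula 2), I write $\tanh(t/2)=(e^t-1)/(e^t+1)$ and apply the geometric sum $\sum_{p\ge 2m}\binom{p}{2m}x^p=x^{2m}/(1-x)^{2m+1}$ with $x=-(e^t-1)/2$ to obtain the identity
\[ 2\tanh(t/2)^{2m+1}=\sum_{p\ge 2m}\frac{(-1)^p}{2^p}\binom{p}{2m}(e^t-1)^{p+1}. \]
Substituting the Stirling second-kind expansion $(e^t-1)^{p+1}=(p+1)!\sum_n\st{n}{p+1}t^n/n!$ and extracting the coefficient of $t^{n+1}/(n+1)!$ immediately gives formula 2).

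The heart of formula 1) is the operator identity
\[ \frac{d}{dt}\tanh(t/2)^{2m+1}=\frac{(e^t-1)^{2m}}{(2m)!}\,D(D+1)(D+2)\cdots(D+2m)\tanh(t/2),\qquad D=\frac{d}{dt}. \]
To prove it I substitute $u=e^t$ so that $D$ becomes the Euler operator $\delta=u\partial_u$. A short induction from $\partial_u u=\delta+1$ yields $\delta(\delta+1)\cdots(\delta+2m)=u\,\partial_u^{2m+1}u^{2m}$. Applied to $\tanh(t/2)=1-2/(u+1)$, the polynomial part of $u^{2m}\cdot(1-2/(u+1))$ is killed by $\partial_u^{2m+1}$ and only the contribution of $-2/(u+1)$ survives, giving $D(D+1)\cdots(D+2m)\tanh(t/2)=2(2m+1)!\,e^t/(e^t+1)^{2m+2}$; direct differentiation of $\tanh(t/2)^{2m+1}=(e^t-1)^{2m+1}/(e^t+1)^{2m+1}$ then verifies the claimed identity.

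With this in hand, I expand $D(D+1)\cdots(D+2m)=\sum_{p=1}^{2m+1}\begin{bmatrix}2m+1\\p\end{bmatrix}D^p$ by the defining property of Stirling first-kind numbers, and use
\[ \frac{1}{2}D^p\tanh(t/2)=\sum_{q\ge 0}\frac{(2^{p+q+1}-1)B_{p+q+1}}{p+q+1}\cdot\frac{t^q}{q!}\qquad(p\ge 1), \]
which follows from $\tfrac{1}{2}\tanh(t/2)=\tfrac{1}{2}-1/(e^t+1)$ together with the Bernoulli expansion $t/(e^t+1)=t/(e^t-1)-2t/(e^{2t}-1)=\sum_n(1-2^n)B_n t^n/n!$. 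Convolving this series with $(e^t-1)^{2m}/(2m)!=\sum_n\st{n}{2m}t^n/n!$ produces the binomial factor $\sum_q\binom{n}{q}\st{n-q}{2m}$, and reading off the coefficient of $t^n/n!$ in $\sum_n D_n^{(k)}t^n/n!$ yields formula 1). The main obstacle is producing the closed form for $D(D+1)\cdots(D+2m)\tanh(t/2)$; once the Euler-operator trick delivers it, the rest is careful bookkeeping with standard Stirling and Bernoulli generating series.
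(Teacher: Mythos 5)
Your proposal is correct, and the two halves relate to the paper differently. For formula 1) you are essentially doing what the paper does: both arguments reduce the generating function to $4\sum_m (2m+1)^{-k}e^t(e^t-1)^{2m}/(e^t+1)^{2m+2}$, convert the rational factor into a Stirling-first-kind combination of derivatives of $1/(e^t+1)$, expand that via $t/(e^t+1)=\sum_n(1-2^n)B_nt^n/n!$, and convolve with $(e^t-1)^{2m}/(2m)!=\sum_s\st{s}{2m}t^s/s!$. The paper gets the Stirling numbers from the identity $x^n(d/dx)^n=\sum_p(-1)^{n-p}\begin{bmatrix}n\\p\end{bmatrix}(x\,d/dx)^p$ combined with a $t\mapsto -t$ substitution, and must then invoke the vanishing of odd-index Bernoulli numbers to clean up a stray $(-1)^{p+q}$; your conjugated version $\delta(\delta+1)\cdots(\delta+2m)=u\,\partial_u^{2m+1}u^{2m}$ applied to $\tanh(t/2)=1-2/(u+1)$ produces the rising factorial $\sum_p\begin{bmatrix}2m+1\\p\end{bmatrix}D^p$ directly and lands on the positive sign $(2^{p+q+1}-1)$ without the parity argument — a genuine, if modest, tidying. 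For formula 2) your route is genuinely different: the paper quotes Cvijovic's closed formula for the higher-order tangent numbers $T_{n,m}$ and converts $\tanh$ to $\tan$ via $\tanh t=-i\tan(it)$, whereas you derive $2\tanh(t/2)^{2m+1}=\sum_{p\ge 2m}(-1)^p2^{-p}\binom{p}{2m}(e^t-1)^{p+1}$ from the negative binomial series with $x=-(e^t-1)/2$ and then apply the standard expansion $(e^t-1)^{p+1}=(p+1)!\sum_N\st{N}{p+1}t^N/N!$. This is self-contained (no external tangent-number formula, no complex rotation) and all steps are legitimate formal-power-series manipulations since $x=-(e^t-1)/2$ has no constant term; both proofs of 2) also share the starting observation $\ath_k(\tanh(t/2))/\sinh t=\frac{d}{dt}\ath_{k+1}(\tanh(t/2))$, which you verify correctly.
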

\begin{proof}
To prove 1), we need the following lemma. We may prove this in the same manner as 
in \cite[Proposition~2.6~(4)]{AIK2014}  and we omit the proof here.

\begin{lemma}\label{Lem-3-1} For $n\ge1$ we have,
\begin{equation*}
x^n \left(\frac{d}{dx}\right)^n=\sum_{m=1}^{n}(-1)^{n-m}\begin{bmatrix}
n\\
m
\end{bmatrix}\left(x\frac{d}{dx}\right)^m. 
\end{equation*}
\end{lemma}

We write

\begin{align}\label{eq-3-4}
\sum_{n=0}^{\infty}D_n^{(k)}\frac{t^n}{n!}&=\frac{\ath_{k}(\tanh (t/2))}{\sinh t}\nonumber \\
&=2 \sum_{m=0}^{\infty} \frac{(\tanh (t/2))^{2m+1}}{(2m+1)^k}\frac{1}{\sinh t}\nonumber\\
&=4 \sum_{m=0}^{\infty}\frac{1}{(2m+1)^k}\frac{e^t(e^t-1)^{2m}}{(e^t+1)^{2m+2}}.&&
\end{align}

Since
\begin{align} \label{eq-3-5}
\frac{1}{(x+1)^{n+1}}&=\frac{(-1)^n}{n!}\left(\frac{d}{dx}\right)^n\frac{1}{x+1},  &&
\end{align}
we see by setting $x=e^t$ and using Lemma~\ref{Lem-3-1} that
\begin{align} \label{eq-3-8}
\frac{e^{nt}}{(e^t+1)^{n+1}}&=\frac{1}{n!}\sum_{p=1}^{n}(-1)^{p} \begin{bmatrix}
n\\
p
\end{bmatrix}\left(\frac{d}{dt}\right)^p\frac{1}{e^t+1}.  &&
\end{align}

From
\begin{align*} 
\frac{t}{e^t-1}&=\sum_{q=0}^\infty  \bb_q \frac{t^q}{q!}
\end{align*}
and 
\begin{align*} 
\frac{1}{e^t+1}&=\frac{1}{e^t-1}-\frac{2}{e^{2t}-1},
\end{align*}
we have
\begin{align*} 
\frac{1}{e^t+1}&=\sum_{q=0}^\infty (1-2^q)\bb_q \frac{t^{q-1}}{q!}.
\end{align*}
By taking the $p$-th derivative of both sides, we get
\begin{align*} 
\left(\frac{d}{dt}\right)^p\left(\frac{1}{e^t+1}\right)&=\sum_{q=p+1}^\infty  (1-2^q)
\frac{\bb_q}q \frac{t^{q-p-1}}{(q-p-1)!}
=\sum_{q=p+1}^\infty  (1-2^{p+q+1})\frac{\bb_{p+q+1}}{p+q+1} \frac{t^{q}}{q!}
\end{align*}
and we substitute this in \eqref{eq-3-8} to obtain
\begin{align*} 
\frac{e^{nt}}{(e^t+1)^{n+1}}&=\frac1{n!}\sum_{p=1}^n (-1)^{p} \begin{bmatrix}
n\\
p
\end{bmatrix}\sum_{q=0}^\infty  (1-2^{p+q+1})\frac{\bb_{p+q+1}}{p+q+1} \frac{t^{q}}{q!}\\
&=\frac1{n!}\sum_{q=0}^\infty \sum_{p=1}^n (-1)^{p} \begin{bmatrix}
n\\
p
\end{bmatrix} (1-2^{p+q+1})\frac{\bb_{p+q+1}}{p+q+1} \frac{t^{q}}{q!}.
\end{align*}
From this, we have
\begin{align*} 
\frac{e^{t}}{(e^{t}+1)^{2m+2}}&=\frac{e^{-(2m+1)t}}{(e^{-t}+1)^{2m+2}}\\
&=\frac1{(2m+1)!}\sum_{q=0}^\infty \sum_{p=1}^{2m+1} (-1)^{p+q} \begin{bmatrix}
2m+1\\
p
\end{bmatrix} (1-2^{p+q+1})\frac{\bb_{p+q+1}}{p+q+1} \frac{t^{q}}{q!}.
\end{align*}
Together with the well-known generating series (\cite[Proposition~2.6~(7)]{AIK2014}, note that $\st{s}{2m}=0$ if
$s<2m$)
\begin{align*} 
(e^{t}-1)^{2m}&=(2m)!\sum_{s=0}^\infty \st{s}{2m} \frac{t^{s}}{s!},
\end{align*}
we obtain
\begin{align*}
&\frac{e^{t}(e^t-1)^{2m}}{(e^{t}+1)^{2m+2}}\\
&=\frac1{2m+1}\sum_{q=0}^\infty \sum_{s=0}^\infty\sum_{p=1}^{2m+1} (-1)^{p+q} (1-2^{p+q+1}) 
\begin{bmatrix}
2m+1\\
p
\end{bmatrix}\st{s}{2m}\frac{\bb_{p+q+1}}{p+q+1} \frac{t^{q+s}}{q!s!} \\
&=\frac{1}{2m+1}\sum_{n=0}^\infty\sum_{q=0}^n \sum_{p=1}^{2m+1} (-1)^{p+q} (1-2^{p+q+1})
\binom{n}{q} \begin{bmatrix}
2m+1 \\
p
\end{bmatrix}\st{n-q}{2m}\frac{ \bb_{p+q+1}}{p+q+1}\frac{t^n}{n!}. \nonumber \\
\end{align*}
Substituting this into \eqref{eq-3-4}, we have 
\begin{align*}
&\sum_{n=0}^{\infty}D_n^{(k)}\frac{t^n}{n!}\\
&=4 \sum_{m=0}^{\infty}\frac{1}{(2m+1)^{k+1}}\sum_{n=0}^\infty\sum_{q=0}^n 
\sum_{p=1}^{2m+1} (-1)^{p+q} (1-2^{p+q+1})\binom{n}{q} \begin{bmatrix}
2m+1 \\
p
\end{bmatrix}\st{n-q}{2m}\frac{ \bb_{p+q+1}}{p+q+1}\frac{t^n}{n!} \nonumber \\
&=4 \sum_{n=0}^\infty\sum_{m=0}^{\lfloor\frac{n}{2}\rfloor}\frac{1}{(2m+1)^{k+1}}
\sum_{p=1}^{2m+1}\sum_{q=0}^{n-2m} (2^{p+q+1}-1)
\binom{n}{q} \begin{bmatrix}
2m+1 \\
p
\end{bmatrix}\st{n-q}{2m}\frac{ \bb_{p+q+1}}{p+q+1}\frac{t^n}{n!}.
\end{align*}
(We have used the facts that $B_{p+q+1}=0$ if $p+q\ge1$ is even and $\st{n-q}{2m}=0$ if $n-q<2m$.)
By equating the coefficients of ${t^{n}}/{n!}$ on both sides, we obtain the desired result.

To prove 2), we employ the following formula (\cite[Proposition 9]{Cvi2011}) for the numbers $T_{n,m}$ 
(``higher order tangent numbers'') defined by
\begin{align}\label{eq-3-12}
\frac{\tan^m t}{m!}= \sum_{n=m}^{\infty} T_{n,m}\frac{t^n}{n!},
\end{align}
namely
\begin{align}\label{eq-3-13}
T_{n,m}=\frac{i^{n-m}}{m!}\sum_{p=m}^{n}(-2)^{n-p}p!\binom{p-1}{m-1} \st{n}{p}.
\end{align}

From the definition we have
\begin{align}\label{eq-3-14}
\sum_{n=0}^{\infty}D_n^{(k)}\frac{t^n}{n!}&=\frac{\ath_{k}(\tanh (t/2))}{\sinh t}
=\frac{d}{dt}\ath_{k+1}(\tanh (t/2))\nonumber \\
&=2\frac{d}{dt} \sum_{m=0}^{\infty}\frac{(\tanh (t/2))^{2m+1}}{(2m+1)^{k+1}}.&&
\end{align}

By using $\tanh t=-i\tan (it)$ and equations \eqref{eq-3-12} and \eqref{eq-3-13}, we can write
\begin{align*}
(\tanh (t/2))^m&=(-i)^m m!\sum_{n=m}^\infty T_{n,m}\frac{i^n}{2^n}\frac{t^n}{n!}\\
&=(-i)^m (-1)^{\frac{n-m}{2}} \sum_{n=m}^\infty \sum_{p=m}^{n}(-2)^{n-p} p!\binom{p-1}{m-1}
\st{n}{p}\frac{i^n}{2^n}\frac{t^n}{n!}\\
&=(-1)^m \sum_{n=m}^\infty \sum_{p=m}^{n}(-1)^{p}\frac{p!}{2^p} \binom{p-1}{m-1}\st{n}{p}\frac{t^n}{n!}.
\end{align*}
We therefore have
\begin{align*}
\sum_{n=0}^{\infty}D_n^{(k)}\frac{t^n}{n!}&=\sum_{m=0}^{\infty}\frac{1}{(2m+1)^{k+1}}
\sum_{n=2m+1}^\infty  
\sum_{p=2m+1}^{n}(-1)^{p+1} \frac{p!}{2^{p-1}}\binom{p-1}{2m}\st{n}{p}\frac{t^{n-1}}{(n-1)!}\\
&=\sum_{m=0}^{\infty}\frac{1}{(2m+1)^{k+1}}\sum_{n=2m}^\infty \sum_{p=2m}^{n}(-1)^{p} 
\frac{(p+1)!}{2^{p}}\binom{p}{2m}\st{n+1}{p+1}\frac{t^{n}}{{n}!}\\
&=\sum_{n=0}^{\infty}\sum_{m=0}^{\lfloor\frac{n}{2}\rfloor} \frac{1}{(2m+1)^{k+1}}  
\sum_{p=2m}^{n} \frac{(-1)^{p}(p+1)!}{2^{p}}\binom{p}{2m}\st{n+1}{p+1}\frac{t^{n}}{{n}!}.
\end{align*}
By equating the coefficients of ${t^{n}}/{n!}$, we complete the proof of the theorem.
\end{proof}

\section{Duality}\label{sec-2}

We now prove the duality property of $D_n^{(k)}$ similar to \eqref{eq-1-4} and \eqref{eq-1-5}.

\begin{theorem}\label{Th-2-1} For $n,k\in \mathbb{Z}_{\geq 0}$, it holds

\begin{equation}
D_{2n}^{(-2k-1)}=D_{2k}^{(-2n-1)}. \label{eq-2-1}
\end{equation}
\end{theorem}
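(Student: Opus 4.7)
The plan is to compute the double generating function
\[
F(x,y):=\sum_{n,k\geq 0} D_{2n}^{(-2k-1)}\,\frac{x^{2n}}{(2n)!}\frac{y^{2k}}{(2k)!}
\]
in closed form and verify that it is symmetric in $x$ and $y$; the duality \eqref{eq-2-1} follows at once by equating coefficients of $x^{2n}y^{2k}/((2n)!(2k)!)$ on the two sides of $F(x,y)=F(y,x)$.

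Starting from \eqref{eq-1-6} with $k$ replaced by $-2k-1$, together with
$\ath_{-2k-1}(z)=2\sum_{m\geq 0}(2m+1)^{2k+1}z^{2m+1}$
from \eqref{eq-1-7}, I first sum over $k$ against $y^{2k}/(2k)!$; the weight $(2m+1)^{2k+1}$ consolidates into a hyperbolic cosine,
\[
\sum_{k\geq 0}\ath_{-2k-1}(z)\frac{y^{2k}}{(2k)!}=2\sum_{m\geq 0}(2m+1)\,z^{2m+1}\cosh((2m+1)y).
\]
Setting $z=\tanh(x/2)$, dividing by $\sinh x=2\sinh(x/2)\cosh(x/2)$, and writing $T:=\tanh(x/2)$, I obtain
\[
F(x,y)=\frac{1}{\cosh^2(x/2)}\sum_{m\geq 0}(2m+1)\,T^{2m}\cosh((2m+1)y).
\]

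Next I evaluate the inner sum in closed form. Splitting $\cosh((2m+1)y)$ into exponentials, each half is the $y$-derivative of a geometric series in $Te^{\pm y}$; combining them over a common denominator gives
\[
\sum_{m\geq 0}(2m+1)T^{2m}\cosh((2m+1)y)=\frac{d}{dy}\!\left[\frac{(1+T^2)\sinh y}{1-2T^2\cosh(2y)+T^4}\right].
\]
Converting the $T$-dependence via $T^2=(\cosh x-1)/(\cosh x+1)$ and using $\cosh(2y)=2\cosh^2 y-1$ together with the identity $\cosh^2 y-\cosh^2 x\sinh^2 y=1-\sinh^2 x\sinh^2 y$, one checks the two collapse identities
\[
1+T^4-2T^2\cosh(2y)=\frac{4(1-\sinh^2 x\,\sinh^2 y)}{(\cosh x+1)^2},\quad \frac{1+T^2}{\cosh^2(x/2)}=\frac{4\cosh x}{(\cosh x+1)^2}.
\]
The factor $(\cosh x+1)^2$ cancels perfectly, and a direct $y$-differentiation yields
\[
F(x,y)=\frac{d}{dy}\!\left[\frac{\cosh x\,\sinh y}{1-\sinh^2 x\,\sinh^2 y}\right]=\frac{\cosh x\cosh y\,(1+\sinh^2 x\,\sinh^2 y)}{(1-\sinh^2 x\,\sinh^2 y)^2},
\]
which is manifestly symmetric under $x\leftrightarrow y$.

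The main obstacle is precisely the algebraic collapse in the middle step: showing that all $T$-dependence consolidates into the compact symmetric factor $1-\sinh^2 x\sinh^2 y$. Once the two identities above are verified (a routine computation from $T^2=(\cosh x-1)/(\cosh x+1)$), the symmetry of $F$ is transparent, and equating coefficients completes the proof. As a sanity check, the expansion $(1+u)(1-u)^{-2}=\sum_{j\geq 0}(2j+1)u^j$ with $u=\sinh^2 x\sinh^2 y$ yields the explicit symmetric formula $F(x,y)=\cosh x\cosh y\sum_{j\geq 0}(2j+1)\sinh^{2j}x\sinh^{2j}y$, which makes the duality self-evident.
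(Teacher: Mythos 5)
Your proof is correct, and it reaches the duality by a genuinely different computation from either of the paper's two proofs. The paper's first proof computes the larger generating function $f(x,y)=\sum_{n,k\ge0}D_n^{(-k)}x^ny^k/(n!\,k!)$ by expanding $(t\,d/dt)^k$ with Stirling numbers of the second kind, arrives at a closed rational expression in $e^{\pm x}$ and $e^{y}$, and then extracts $F$ as a $y$-derivative of the odd-in-$y$ part, ending with $F(x,y)=G(x,y)+G(x,-y)+G(-x,y)+G(-x,-y)$ for $G(x,y)=e^{x+y}/(1+e^x+e^y-e^{x+y})^2$; the second proof starts from the same identity you derive first, namely $F(x,y)=\tfrac{2}{\sinh x}\sum_{m\ge0}(2m+1)\tanh^{2m+1}(x/2)\cosh((2m+1)y)$, but never sums it in closed form, instead matching recursions for the coefficients of $\cosh((2i+1)x)$ and $\cosh((2n+1)y)$ in the two expansions of $F$. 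You instead sum the series directly: writing it as $\frac{d}{dy}\sum_m T^{2m}\sinh((2m+1)y)$, evaluating the two geometric series in $Te^{\pm y}$, and substituting $T^2=(\cosh x-1)/(\cosh x+1)$ collapses everything to
\[
F(x,y)=\frac{\cosh x\cosh y\,(1+\sinh^2x\,\sinh^2y)}{(1-\sinh^2x\,\sinh^2y)^2},
\]
which is visibly symmetric. I verified your two collapse identities and checked low-order data against the paper ($F(x,0)=\cosh x$, consistent with $D_{2n}^{(-1)}=1$, and the coefficient of $x^2y^2/(2!\,2!)$ equal to $13=D_2^{(-3)}$ as in Remark \ref{Rem-2-8}), so the formula agrees with the paper's $G$-expression. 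What your route buys is a more compact symmetric closed form and, via $(1+u)/(1-u)^2=\sum_{j\ge0}(2j+1)u^j$, the manifestly symmetric expansion $F(x,y)=\cosh x\cosh y\sum_{j\ge0}(2j+1)\sinh^{2j}x\,\sinh^{2j}y$, hence an explicit symmetric formula for $D_{2n}^{(-2k-1)}$ itself; what the paper's first proof buys is the richer function $f(x,y)$ encoding all $D_n^{(-k)}$ and the direct comparison with the known generating function for $C_n^{(-k-1)}$. The only point to make explicit in a final write-up is that all manipulations are with formal power series (as the paper stipulates), which is legitimate here because $T=O(x)$ makes every sum over $m$ formally convergent.
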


We give two proofs using a generating function.  The first proof gives a closed, symmetric formula 
for the generating function, whereas the second is more indirect and a little involved.  We however
think the second way may be of independent interest and decided to include it here.

Consider the following generating function of $D_{2n}^{(-2k-1)}$:
\begin{equation*}
F(x,y)=\sum_{n=0}^\infty \sum_{k=0}^\infty D_{2n}^{(-2k-1)}\frac{x^{2n}}{(2n)!}\frac{y^{2k}}{(2k)!}.
\end{equation*}
We establish the closed formula of $F(x,y)$ as follows.  The theorem follows immediately from the 
symmetry of the formula.
\begin{prop}\label{C-2-7}  Set 
\[ G(x,y)=\frac{e^{x+y}}{(1+e^x+e^y-e^{x+y})^2}.\]  Then we have
$$F(x,y)=G(x,y)+G(x,-y)+G(-x,y)+G(-x,-y).$$
In other words, $F(x,y)$ is the sub-series of $4G(x,y)$ which is even both in $x$ and $y$.
\end{prop}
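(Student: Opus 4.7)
The plan is to compute both sides in terms of $\tau=\tanh(x/2)$ (keeping $y$ as $e^{\pm y}$) and check they agree. Since $\sinh(x+y) $ and $\cosh$ appear naturally on both sides, this substitution should reveal their equality.

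First I would unfold $F(x,y)$ from the definition. Since $D_{2n+1}^{(k)}=0$, the inner sum over $n$ just gives $A_{-2k-1}(\tanh(x/2))/\sinh x$. Using $A_{-2k-1}(z)=2\sum_{m\ge 0}(2m+1)^{2k+1}z^{2m+1}$ and exchanging the order of summation (justified formally), the sum $\sum_{k\ge 0}((2m+1)y)^{2k}/(2k)!$ collapses to $\cosh((2m+1)y)$. Writing $\cosh((2m+1)y)=\tfrac12(e^{(2m+1)y}+e^{-(2m+1)y})$ splits the whole expression into two geometric-type series of the form $\sum_m(2m+1)w^{2m+1}$ with $w=\tau e^{\pm y}$. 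Applying the closed form $\sum_m (2m+1)w^{2m+1}=w(1+w^2)/(1-w^2)^2$ and $1/\sinh x=(1-\tau^2)/(2\tau)$ yields
\[
F(x,y)=\frac{1-\tau^2}{2}\left[\frac{e^{y}(1+\tau^2 e^{2y})}{(1-\tau^2 e^{2y})^2}+\frac{e^{-y}(1+\tau^2 e^{-2y})}{(1-\tau^2 e^{-2y})^2}\right].
\]

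Next I would massage $G(x,y)$ using the key identity
\[
1+e^x+e^y-e^{x+y}=2-(e^x-1)(e^y-1),
\]
which is immediate by expansion. Substituting $e^x=(1+\tau)/(1-\tau)$, so that $e^x-1=2\tau/(1-\tau)$ and $e^{x+y}=(1+\tau)e^y/(1-\tau)$, a short calculation gives the very clean form
\[
G(x,y)=\frac{(1-\tau^2)e^{y}}{4(1-\tau e^{y})^2}.
\]
Under $x\mapsto -x$ we have $\tau\mapsto -\tau$, and under $y\mapsto -y$ we simply flip the sign of $y$.

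Finally I would add the four terms. Using the elementary identity
\[
\frac{1}{(1-w)^2}+\frac{1}{(1+w)^2}=\frac{2(1+w^2)}{(1-w^2)^2},
\]
applied separately with $w=\tau e^{y}$ and with $w=\tau e^{-y}$, the sum $G(x,y)+G(-x,y)+G(x,-y)+G(-x,-y)$ collapses to precisely the bracketed expression in my formula for $F(x,y)$. This matches and proves the proposition. The theorem then follows at once, because the right-hand side of Proposition \ref{C-2-7} is manifestly symmetric in $x$ and $y$ (each $G(\pm x,\pm y)$ satisfies $G(a,b)=G(b,a)$), so equating coefficients of $x^{2n}y^{2k}/(2n)!(2k)!$ gives $D_{2n}^{(-2k-1)}=D_{2k}^{(-2n-1)}$.

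I do not anticipate a serious obstacle: the computation splits naturally into two halves (one for $F$, one for $G$) that meet at the same explicit rational function of $\tau$ and $e^{\pm y}$. The only place where one must be careful is the transition from $G$ expressed in $e^x,e^y$ to its expression in $\tau$; the identity $1+e^x+e^y-e^{x+y}=2-(e^x-1)(e^y-1)$ is what makes the denominator factor nicely as $(1-\tau e^y)^2$ after clearing, and recognizing this trick is the only non-routine step.
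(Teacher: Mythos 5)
Your proof is correct, but it reaches Proposition \ref{C-2-7} by a different route than the paper. The paper's first proof goes through the full two-variable generating function $f(x,y)=\sum_{n,k}D_n^{(-k)}x^ny^k/(n!\,k!)$: it evaluates $f$ in closed form (Proposition \ref{Pr-2-6}) using the operator identity $(t\,d/dt)^k=\sum_m\st{k}{m}t^m(d/dt)^m$ and the Stirling generating function $\sum_k\st{k}{m}y^k/k!=(e^y-1)^m/m!$, and then extracts $F$ from $f$ by antisymmetrizing in $y$ and differentiating, leaving the final identification with $G(x,y)+G(x,-y)+G(-x,y)+G(-x,-y)$ as a ``straightforward calculation.'' You instead compute $F$ head-on: your resummation of the $k$-sum into $\cosh((2n+1)y)$ is exactly the paper's Lemma \ref{Lem-2-2} (the starting point of its \emph{second} proof), but where the paper then develops a recursive comparison of coefficients ($a_i^{(m)}$ versus $b_j^{(n,l)}$), you close the $n$-sum explicitly via $\sum_m(2m+1)w^{2m+1}=w(1+w^2)/(1-w^2)^2$ with $w=\tau e^{\pm y}$, $\tau=\tanh(x/2)$, and match it against $G$ rewritten through $1+e^x+e^y-e^{x+y}=2-(e^x-1)(e^y-1)$ as $(1-\tau^2)e^y/\bigl(4(1-\tau e^y)^2\bigr)$. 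I checked the key computations ($1/\sinh x=(1-\tau^2)/(2\tau)$, the reduction of $G$, and the pairing identity $1/(1-w)^2+1/(1+w)^2=2(1+w^2)/(1-w^2)^2$) and they are all correct as formal power series, since $\tau e^{\pm y}$ has no constant term. Your approach is shorter and makes the final verification fully explicit, avoiding Stirling numbers altogether; what it does not deliver is the closed form for $f(x,y)$ itself, which the paper wants anyway for the comparison with the $C_n^{(-k-1)}$ generating function in Remark \ref{Rem-2-8}. Your concluding deduction of Theorem \ref{Th-2-1} from the symmetry $G(a,b)=G(b,a)$ is the same as the paper's.
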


\begin{proof} 

We first compute the generating function of all $D_n^{(-k)}$,
\begin{equation}
f(x,y)=\sum_{n=0}^\infty \sum_{k=0}^\infty D_{n}^{(-k)}\frac{x^{n}}{n!}\frac{y^k}{k!}.  \label{eq-2-13}
\end{equation}

\begin{prop}\label{Pr-2-6} We have
\begin{equation}
f(x,y)=\frac{e^x(e^y-1)}{1+e^x+e^{y}-e^{x+y}}+\frac{e^{-x}(e^y-1)}{1+e^{-x}+e^y-e^{-x+y}}.\label{eq-2-14}
\end{equation}
\end{prop}

\begin{proof}
By definition
\begin{align*}
f(x,y)&=\sum_{k=0}^\infty \frac{\ath_{-k}(\tanh(x/2))}{\sinh x}\frac{y^k}{k!}\\
& =\frac{2}{\sinh x}\sum_{k=0}^\infty \sum_{n=0}^\infty (2n+1)^k (\tanh(x/2))^{2n+1}\frac{y^k}{k!}.
\end{align*}
We note that
\[ 2\sum_{n=0}^\infty (2n+1)^k t^{2n+1}=2\left(t\frac{d}{dt}\right)^{k}\frac{t}{1-t^2}
=\left(t\frac{d}{dt}\right)^{k}\left(\frac1{1-t}-\frac1{1+t}\right),\]
and by using the standard formula ({\it cf., e.g.,} \cite[Proposition~2.6~(4)]{AIK2014})
\[ \left(t\frac{d}{dt}\right)^{k}=\sum_{m=1}^{k}\st{k}{m}t^m\left(\frac{d}{dt}\right)^{m}, \]
we see the right-hand side is equal to
\begin{align*}
& \sum_{m=1}^{k}\st{k}{m}t^m\left(\frac{d}{dt}\right)^{m}\left(\frac{1}{1-t}-\frac{1}{1+t}\right)\\
& \ \ =\sum_{m=1}^{k}\st{k}{m} m!\left(\frac{t^m}{(1-t)^{m+1}}-\frac{(-t)^m}{(1+t)^{m+1}}\right).
\end{align*}
Hence, by setting $t=\tanh(x/2)$ and noting $t/(1-t)=(e^x-1)/2,\,-t/(1+t)=(e^{-x}-1)/2$, 
$(\sinh x) (1-t)=e^{-x}(e^x-1)$, $(\sinh x) (1+t)=e^x-1$, we have 
\begin{align*}
f(x,y)&=\frac{1}{\sinh x}\sum_{k=0}^\infty\sum_{m=1}^{k}\st{k}{m} m!
\left(\frac{t^m}{(1-t)^{m+1}}-\frac{(-t)^m}{(1+t)^{m+1}}\right)\frac{y^k}{k!}
\quad\  (t=\tanh(x/2))\\
& =\sum_{k=0}^\infty \sum_{m=1}^k \st{k}{m}m!\bigg\{ \frac{e^x}{e^x-1}\left(\frac{e^x-1}{2}\right)^m
-\frac{1}{e^x-1}\left(\frac{e^{-x}-1}{2}\right)^m\bigg\}\frac{y^k}{k!}\\
& =\sum_{m=1}^\infty (e^y-1)^m \bigg\{ \frac{e^x}{e^x-1}\left(\frac{e^x-1}{2}\right)^m
-\frac{1}{e^x-1}\left(\frac{e^{-x}-1}{2}\right)^m\bigg\}\\
& = \frac{e^x}{e^x-1}\cdot\frac{(e^y-1)(e^x-1)}{2-(e^y-1)(e^x-1)}-\frac{1}{e^x-1}
\cdot\frac{(e^y-1)(e^{-x}-1)}{2-(e^y-1)(e^{-x}-1)}\\
&=\frac{e^x(e^y-1)}{1+e^x+e^{y}-e^{x+y}}+\frac{e^{-x}(e^y-1)}{1+e^{-x}+e^y-e^{-x+y}}.
\end{align*}
\end{proof}
From \eqref{eq-2-14} we see that $f(x,y)$ is even in $x$, and so we have 
$$\frac{f(x,y)-f(x,-y)}{2}=\sum_{n=0}^\infty \sum_{k=0}^\infty 
D_{2n}^{(-2k-1)}\frac{x^{2n}}{(2n)!}\frac{y^{2k+1}}{(2k+1)!}.$$
Our generating function $F(x,y)$ is the derivative of this with respect to $y$, and 
Proposition~\ref{C-2-7} follows from a straightforward calculation.
Theorem~\ref{Th-2-1} is thus proved. 

\end{proof}

\begin{remark}\label{Rem-2-8}
We recall that
$$\sum_{n=0}^\infty \sum_{k=0}^\infty C_{n}^{(-k-1)}\frac{x^{n}}{n!}\frac{y^{k}}{k!}=
\frac{e^{x+y}}{(e^x+e^y-e^{x+y})^2}$$
(see \cite[Section 2]{Kaneko-Mem}), which is remarkably similar to $G(x,y)$. The general 
coefficients of $4G(x,y)$ not necessarily even either in $x$ or $y$ may worth studying.
The first several terms are given as
\begin{align*} 4G(x,y)&=1+\frac{x}{1!}+\frac{y}{1!}+\frac{x^2}{2!}+2\frac{x}{1!}\frac{y}{1!}
+\frac{y^2}{2!}+\frac{x^3}{3!}+4\frac{x^2}{2!}\frac{y}{1!}+4\frac{x}{1!}\frac{y^2}{2!}
+\frac{y^3}{3!}\\
&\quad +\frac{x^4}{4!}+8\frac{x^3}{3!}\frac{y}{1!}+13\frac{x^2}{2!}\frac{y^2}{2!}+
8\frac{x}{1!}\frac{y^3}{3!}+\frac{y^4}{4!}+\cdots.
\end{align*}
\end{remark}

For the second proof of Theorem~\ref{Th-2-1}, we need several lemmas.

\begin{lemma}\label{Lem-2-2}
\begin{equation*}
F(x,y)=2\sum_{n=0}^{\infty}\frac{\partial}{\partial x}\left(\tanh^{2n+1}(x/2)\right)\cosh((2n+1)y). 
\end{equation*}
\end{lemma}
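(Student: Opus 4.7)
The plan is to start from the definition
\[
F(x,y) = \sum_{n,k \ge 0} D_{2n}^{(-2k-1)}\frac{x^{2n}}{(2n)!}\frac{y^{2k}}{(2k)!}
\]
and unfold it via the generating series \eqref{eq-1-6}. Since $D_{2n+1}^{(k)} = 0$ for every~$k$, the inner $x$-sum for each fixed $k$ equals the full generating series
$\ath_{-2k-1}(\tanh(x/2))/\sinh x$. Substituting the definition \eqref{eq-1-7} of $\ath_{-2k-1}$, I would obtain
\[
F(x,y) = \frac{2}{\sinh x}\sum_{k=0}^{\infty}\sum_{m=0}^{\infty}(2m+1)^{2k+1}\tanh^{2m+1}(x/2)\,\frac{y^{2k}}{(2k)!}.
\]

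Next I would swap the two summations. This is legitimate at the level of formal power series in $x$ and $y$: the series $\tanh^{2m+1}(x/2)$ has $x$-valuation at least $2m+1$, so only finitely many $m$ contribute to any fixed monomial $x^N y^M$. After swapping, the inner sum collapses as
\[
\sum_{k=0}^{\infty}\frac{(2m+1)^{2k+1}\,y^{2k}}{(2k)!} = (2m+1)\cosh((2m+1)y),
\]
giving
\[
F(x,y) = \frac{2}{\sinh x}\sum_{m=0}^{\infty}(2m+1)\tanh^{2m+1}(x/2)\cosh((2m+1)y).
\]

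The final step is to recognize the prefactor via the identity
\[
\frac{\partial}{\partial x}\tanh^{2m+1}(x/2) = (2m+1)\,\frac{\tanh^{2m+1}(x/2)}{\sinh x},
\]
which is immediate from $\sinh x = 2\sinh(x/2)\cosh(x/2)$ together with $\frac{d}{dx}\tanh(x/2) = \tfrac{1}{2}\cosh^{-2}(x/2)$. Substituting this identity term-by-term yields exactly the asserted formula.

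The only subtle point is the middle swap of summations; the two series diverge analytically, but the interchange is unambiguous once everything is read as formal power series, and no estimates are required. Everything else is short direct manipulation of elementary functions.
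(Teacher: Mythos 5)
Your proof is correct and takes essentially the same route as the paper's: expand $F(x,y)$ via \eqref{eq-1-6} and \eqref{eq-1-7}, collapse the $k$-sum to $(2n+1)\cosh((2n+1)y)$, and identify $(2n+1)\tanh^{2n+1}(x/2)/\sinh x$ with $\frac{\partial}{\partial x}\tanh^{2n+1}(x/2)$. Your explicit justification of the interchange of summations as a formal-power-series statement is a point the paper leaves implicit.
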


\begin{proof}
By \eqref{eq-1-6}, we have
\begin{align*}
F(x,y)&= 2\sum_{k=0}^\infty \frac{\ath_{-2k-1}(\tanh(x/2))}{\sinh(x)}\frac{y^{2k}}{(2k)!}\\
&= \frac{2}{\sinh(x)}\sum_{k=0}^\infty \sum_{n=0}^\infty (2n+1)^{2k+1}\tanh^{2n+1}(x/2)\frac{y^{2k}}{(2k)!}\\
&= \frac{2}{\sinh(x)}\sum_{n=0}^\infty (2n+1)\tanh^{2n+1}(x/2)\cosh((2n+1)y)\\
&= \frac{1}{\sinh(x/2)\cosh(x/2)}\sum_{n=0}^\infty (2n+1)\tanh^{2n}(x/2)\frac{\sinh(x/2)}{\cosh(x/2)}\cosh((2n+1)y)\\
&=2\sum_{n=0}^{\infty}\frac{\partial}{\partial x}\left(\tanh^{2n+1}(x/2)\right)\cosh((2n+1)y).
\end{align*}
Thus we have the assertion.
\end{proof}

We write
$$F(x,y)=\sum_{m=0}^\infty g_m(x)\frac{y^{2m}}{(2m)!}
=\sum_{m=0}^\infty h_m(y)\frac{x^{2m}}{(2m)!}.$$
Then if we could prove $g_m(x)=h_m(x)$ for any $m\geq 0$, we are done. 

First, we look at  $g_m(x)$. Using Lemma~\ref{Lem-2-2}, we have
\begin{align*}
g_m(x)&=\left(\frac{\partial}{\partial y}\right)^{2m}F(x,y)\,\bigg|_{y=0}
=2\frac{d}{dx}\sum_{n=0}^\infty (2n+1)^{2m}\tanh^{2n+1}(x/2).
\end{align*}
Here we note that
\begin{equation}
\sum_{n=0}^\infty (2n+1)^{2m}t^{2n+1}=\left(t\frac{d}{dt}\right)^{2m}\sum_{n=0}^\infty t^{2n+1}
=\left(t\frac{d}{dt}\right)^{2m}\frac{t}{1-t^2}.\label{eq-2-3}
\end{equation}
Setting $t=\tanh(x/2)$ and noting
$$dt=\frac{1}{2}\frac{1}{\cosh^2(x/2)}dx,\quad \frac{t}{1-t^2}
=\frac{\tanh(x/2)}{1-\tanh^2(x/2)}=\frac{1}{2}\sinh x,$$
we have
$$t\frac{d}{dt}=\tanh(x/2)\cdot 2\cosh^2(x/2)\frac{d}{dx}=\sinh x\, \frac{d}{dx}.$$
Therefore we obtain
\begin{equation}
g_m(x)=\frac{d}{dx}\left(\sinh x\,\frac{d}{dx}\right)^{2m}\sinh x. \label{eq-2-4}
\end{equation}
We can explicitly write down the right-hand side by using the following lemma. 

For $m\in \mathbb{Z}_{\geq 0}$, we define sequences $\{ a_i^{(m)}\}_{0\leq i\leq m}\subset \mathbb{Q}$ 
inductively by 
\begin{equation}
\begin{split}
& a_0^{(0)}=1, \\
& a_i^{(m)}=\frac{1}{2}\left\{ i(2i-1)a_{i-1}^{(m-1)}-(2i+1)^2a_{i}^{(m-1)}+(i+1)(2i+3)a_{i+1}^{(m-1)}\right\}
\quad (m\geq 1), 
\end{split}
\label{eq-2-5}
\end{equation}
where we formally interpret $a_{i}^{(m)}=0$ for $i<0$ or $i>m$.

\begin{lemma}\label{Lem-2-3} For $m\in \mathbb{Z}_{\geq 0}$, 
\begin{equation}
\left(\sinh x\,\frac{d}{dx}\right)^{2m}\sinh x=\sum_{i=0}^{m}a_i^{(m)}\sinh((2i+1)x).  \label{eq-2-6}
\end{equation}
\end{lemma}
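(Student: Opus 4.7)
My plan is to prove Lemma~\ref{Lem-2-3} by induction on $m$, reducing everything to the single key calculation of how the operator $L:=\sinh x\,\dfrac{d}{dx}$ applied twice acts on an ``odd frequency'' sine $S_j:=\sinh((2j+1)x)$. The base case $m=0$ is immediate since $a_0^{(0)}=1$. For the inductive step I would write $L^{2m}\sinh x=L^{2}\bigl(L^{2(m-1)}\sinh x\bigr)$ and apply $L^{2}$ termwise to the expression $\sum_{i=0}^{m-1}a_i^{(m-1)}S_i$ given by the inductive hypothesis.

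The core computation is to show
\begin{equation*}
L^{2}S_j \;=\; \frac{2j+1}{2}\Bigl[(j+1)\,S_{j+1}\;-\;(2j+1)\,S_j\;+\;j\,S_{j-1}\Bigr],
\end{equation*}
with the convention $S_{-1}=0$. To derive this I would apply $L$ once using the product-to-sum identity $\sinh x\cosh((2j+1)x)=\tfrac12\bigl[\sinh((2j+2)x)-\sinh(2jx)\bigr]$, obtaining
\begin{equation*}
L S_j \;=\; \frac{2j+1}{2}\bigl[\sinh((2j+2)x)-\sinh(2jx)\bigr],
\end{equation*}
and then apply $L$ a second time, this time using $\sinh x\cosh((2j+2)x)=\tfrac12[\sinh((2j+3)x)-\sinh((2j+1)x)]$ and $\sinh x\cosh(2jx)=\tfrac12[\sinh((2j+1)x)-\sinh((2j-1)x)]$. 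Collecting terms gives the displayed formula, which crucially sends $S_j$ back into the span of $\{S_{j-1},S_j,S_{j+1}\}$, so that the expansion $\sum_i a_i^{(m)}S_i$ is preserved.

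Substituting this into $L^{2}\sum_i a_i^{(m-1)}S_i$ and gathering the coefficient of $S_i$ (contributed by $j=i-1$, $j=i$, and $j=i+1$), I would find
\begin{equation*}
[\,S_i\,]\;=\;\tfrac12\Bigl\{i(2i-1)a_{i-1}^{(m-1)}-(2i+1)^{2}a_{i}^{(m-1)}+(i+1)(2i+3)a_{i+1}^{(m-1)}\Bigr\},
\end{equation*}
which is precisely the recursion~\eqref{eq-2-5} defining $a_i^{(m)}$. This closes the induction.

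The only real obstacle is bookkeeping: one must correctly handle the boundary term at $j=0$ (where $\sinh(2jx)$ vanishes so the $S_{j-1}$ piece is absent, consistent with $a_{-1}^{(m-1)}=0$) and verify that no coefficients outside the range $0\le i\le m$ are produced, which follows from the convention $a_i^{(m-1)}=0$ for $i>m-1$ together with the fact that $L^{2}$ shifts the frequency index by at most $\pm 1$. Once these index conventions are spelled out, the inductive computation is mechanical.
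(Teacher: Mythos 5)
Your proposal is correct and follows essentially the same route as the paper: induction on $m$, applying $\sinh x\,\frac{d}{dx}$ twice via the product-to-sum identity $\sinh x\cosh(kx)=\tfrac12[\sinh((k+1)x)-\sinh((k-1)x)]$ and matching the resulting coefficients with the recursion \eqref{eq-2-5}. The key computation $L^2S_j=\tfrac{2j+1}{2}[(j+1)S_{j+1}-(2j+1)S_j+jS_{j-1}]$ and the boundary conventions you note are exactly what the paper uses.
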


\begin{proof}
We give the proof by induction on $m$. For $m=0$, the identity trivially holds. We assume 
\begin{equation*}
\left(\sinh x\,\frac{d}{dx}\right)^{2(m-1)}\sinh x=\sum_{i=0}^{m-1}a_i^{(m-1)}\sinh((2i+1)x).  
\end{equation*}
Using 
$$\cosh(kx)\sinh(x)=\frac{1}{2}\left( \sinh((k+1)x)-\sinh((k-1)x)\right),$$
we have
\begin{align*}
& \left(\sinh x\,\frac{d}{dx}\right)^{2m-1}\sinh x=\frac{1}{2}\sum_{i=0}^{m-1}(2i+1)a_i^{(m-1)}
\left(\sinh((2i+2)x)-\sinh(2ix)\right), 
\end{align*}
and  
\begin{align*}
& \left(\sinh x\,\frac{d}{dx}\right)^{2m}\sinh x\\
& \ =\sum_{i=0}^{m-1}(2i+1)a_i^{(m-1)}\bigg\{ \frac{i+1}{2}\left(\sinh((2i+3)x)-\sinh((2i+1)x)\right)\\
& \qquad \qquad  -\frac{i}{2}\left(\sinh((2i+1)x)-\sinh((2i-1)x)\right)\bigg\}\\
& \ =\frac{1}{2}\sum_{i=1}^{m}i(2i-1)a_{i-1}^{(m-1)}\sinh((2i+1)x)\\
& \qquad -\frac{1}{2}\sum_{i=0}^{m-1}(2i+1)^2a_{i}^{(m-1)}\sinh((2i+1)x)\\
& \qquad +\frac{1}{2}\sum_{i=0}^{m-2}(i+1)(2i+3)a_{i+1}^{(m-1)}\sinh((2i+1)x).
\end{align*}
Hence, using \eqref{eq-2-5}, we complete the proof by induction.
\end{proof}

Using this lemma, we obtain
\begin{equation}
g_m(x)=\sum_{i=0}^m (2i+1)a_i^{(m)}\cosh((2i+1)x).  \label{eq-2-7}
\end{equation}

Secondly, we compute $h_m(y)$. Again by using Lemma~\ref{Lem-2-2}, we have
\begin{align}
h_m(y)&=\left(\frac{\partial}{\partial x}\right)^{2m}F(x,y)\,\bigg|_{x=0}\notag\\
      &=2\sum_{n=0}^\infty \left(\frac{d}{dx}\right)^{2m+1}\left(\tanh^{2n+1}(x/2)\right) 
      \cosh((2n+1)y)\,\bigg|_{x=0} \notag\\
      & =2\sum_{n=0}^{m} \left(\frac{d}{dx}\right)^{2m+1}\tanh^{2n+1}(x/2)\,\bigg|_{x=0}\cdot \cosh((2n+1)y) \label{eq-2-8}
\end{align}
because 
$$\tanh^{2n+1}(x/2)=\frac{x^{2n+1}}{2^{2n+1}}+O(x^{2n+2})\ \ (x\to 0).$$
We write down the right-hand side of \eqref{eq-2-8} by using the following lemma. 

\begin{lemma}\label{Lem-2-4} For $n,l\in \mathbb{Z}_{\geq 0}$, there exist sequences 
$\{b_j^{(n,l)}\}_{0\leq j\leq l}\subset \mathbb{Q}$ such that
\begin{equation}
\left(\frac{d}{dx}\right)^{l}\tanh^{2n+1}(x/2)=\sum_{j=0}^{l}b_j^{(n,l)}
\tanh^{2n+1-l+2j}(x/2), \label{eq-2-9}
\end{equation}
where $b_j^{(n,l)}=0$ if $2n+1-l+2j<0$. 
In particular, 
\begin{equation}
\left(\frac{d}{dx}\right)^{2m+1}\tanh^{2n+1}(x/2)\,\bigg|_{x=0}=b_{m-n}^{(n,2m+1)}. \label{eq-2-10}
\end{equation}
\end{lemma}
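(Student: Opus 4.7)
The plan is to prove \eqref{eq-2-9} by induction on $l$, using the elementary identity
\[
\frac{d}{dx}\tanh^{k}(x/2)=\frac{k}{2}\tanh^{k-1}(x/2)-\frac{k}{2}\tanh^{k+1}(x/2),
\]
which follows from $\frac{d}{dx}\tanh(x/2)=\frac12(1-\tanh^2(x/2))$. The base case $l=0$ is trivial with $b_0^{(n,0)}=1$. For the induction step, if the claim holds for $l$, then differentiating term by term sends each $\tanh^{2n+1-l+2j}(x/2)$ to a linear combination of $\tanh^{2n-l+2j}(x/2)=\tanh^{2n+1-(l+1)+2j}(x/2)$ and $\tanh^{2n+2-l+2j}(x/2)=\tanh^{2n+1-(l+1)+2(j+1)}(x/2)$, so the expression for the $(l+1)$-th derivative has precisely the required shape. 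Reading off the coefficients gives the explicit recurrence
\[
b_j^{(n,l+1)}=\frac{2n+1-l+2j}{2}\,b_j^{(n,l)}-\frac{2n-l+2j-1}{2}\,b_{j-1}^{(n,l)},
\]
with the conventions $b_{-1}^{(n,l)}=b_{l+1}^{(n,l)}=0$.

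The vanishing claim $b_j^{(n,l)}=0$ when $2n+1-l+2j<0$ follows from the same induction, because in that range the factor $2n+1-l+2j$ producing $b_j^{(n,l+1)}$ from $b_j^{(n,l)}$ remains nonpositive and, crucially, the recurrence only feeds coefficients whose attached $\tanh$-exponent is nonnegative; alternatively, one can simply observe that $\tanh^{2n+1}(x/2)$ is a formal power series in $x$, so any coefficient attached to a would-be negative power of $\tanh(x/2)$ must be identically zero after collecting terms.

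For \eqref{eq-2-10}, I would substitute $l=2m+1$ in \eqref{eq-2-9} to get
\[
\left(\frac{d}{dx}\right)^{2m+1}\tanh^{2n+1}(x/2)=\sum_{j=0}^{2m+1}b_j^{(n,2m+1)}\tanh^{2(j+n-m)}(x/2).
\]
Setting $x=0$ and using $\tanh(0)=0$, every term with a strictly positive exponent vanishes, leaving only the term corresponding to $j+n-m=0$, namely $j=m-n$. When $0\le m-n\le 2m+1$ this yields $b_{m-n}^{(n,2m+1)}$; when $m<n$ no such index exists and the sum evaluates to $0$, which matches the convention $b_{m-n}^{(n,2m+1)}=0$ for negative indices.

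The only mild subtlety is bookkeeping the vanishing convention and checking that the recurrence is consistent with $b_j^{(n,l)}=0$ for $2n+1-l+2j<0$; everything else is a mechanical induction driven by the single differentiation identity above.
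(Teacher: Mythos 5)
Your proof is correct and takes essentially the same route as the paper: an induction on $l$ driven by the identity $\frac{d}{dx}\tanh^{k}(x/2)=\frac{k}{2}\bigl(\tanh^{k-1}(x/2)-\tanh^{k+1}(x/2)\bigr)$. The paper's proof is a one-line version of this same induction; the explicit recurrence for $b_j^{(n,l)}$, the handling of the vanishing convention, and the evaluation at $x=0$ that you spell out are all consistent with it.
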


\begin{proof}
For each $n$, we can immediately obtain the form \eqref{eq-2-9} by induction on $l$, using the relation
$$\frac{d}{dx}\tanh^{2n+1}(x/2)=\frac{2n+1}{2}\left(\tanh^{2n}(x/2)-\tanh^{2n+2}(x/2)\right).$$
\end{proof}

Combining this lemma and \eqref{eq-2-8}, we obtain
\begin{equation}
h_m(y)=2\sum_{n=0}^{m}b_{m-n}^{(n,2m+1)}\cosh((2n+1)y). \label{eq-2-11}
\end{equation}

Now we are going to show $2b_{m-n}^{(n,2m+1)}=(2i+1)a_i^{(m)}$,
which implies $g_m(x)=h_m(x)$. For $m,n\in \mathbb{Z}_{\geq 0}$ with $n\leq m$, set 
$\bt_n^{(m)}=2b_{m-n}^{(n,2m+1)}$. Then, by \eqref{eq-2-10}, we have $\bt_0^{(0)}=1$. 
Furthermore the following lemma holds.

\begin{lemma}\label{Lem-2-5}\ For $m\in \mathbb{Z}_{\geq 1}$, we have the recursion
\begin{equation}
\bt_n^{(m)}=\frac{2n+1}{2}\left\{n\bt_{n-1}^{(m-1)}-(2n+1)\bt_n^{(m-1)}+(n+1)
\bt_{n+1}^{(m-1)}\right\}\quad (n\leq m),  \label{eq-2-12}
\end{equation}
where we interpret $b_{i}^{(k)}=0$ for $i<0$ or $i>k$. 
\end{lemma}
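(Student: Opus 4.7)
The plan is to derive the recursion directly by applying $D^{2m-1}$ (with $D := d/dx$) to a second-derivative identity for $\tanh^{2n+1}(x/2)$ and reading off the constant term in the $u$-expansion at $x=0$, where $u := \tanh(x/2)$. Since $Du = (1-u^2)/2$, one has $Du^k = \tfrac{k}{2}(u^{k-1}-u^{k+1})$ for every $k\ge 0$, and iterating once would give
\begin{equation*}
D^{2} u^{2n+1} = \frac{2n+1}{2}\bigl[\,n\,u^{2n-1} - (2n+1)\,u^{2n+1} + (n+1)\,u^{2n+3}\,\bigr].
\end{equation*}
This identity holds uniformly for $n\ge 0$: the factor $n$ in front of the formally problematic $u^{2n-1}$ term kills that term when $n=0$, so nothing special happens at the boundary.

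Next I would apply $D^{2m-1}$ to both sides and then evaluate at $x=0$ (so that $u=0$), which extracts the constant term of each of the four resulting $u$-power series. By Lemma~\ref{Lem-2-4} (specifically \eqref{eq-2-10} with $m$ replaced by $m-1$), each such constant term is exactly one of the $b^{(\cdot,\cdot)}_{\cdot}$ coefficients: the left-hand side contributes $b^{(n,2m+1)}_{m-n}$, while the three right-hand terms contribute $b^{(n-1,2m-1)}_{m-n}$, $b^{(n,2m-1)}_{m-1-n}$ and $b^{(n+1,2m-1)}_{m-2-n}$ respectively, simply by matching the exponent $2\nu+1-l+2j = 0$ that selects the constant term in each case.

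Multiplying the resulting equality by $2$ and using $\bt_n^{(m)} = 2b^{(n,2m+1)}_{m-n}$ together with the corresponding identifications $\bt_{n-1}^{(m-1)} = 2b^{(n-1,2m-1)}_{m-n}$, $\bt_n^{(m-1)} = 2b^{(n,2m-1)}_{m-1-n}$ and $\bt_{n+1}^{(m-1)} = 2b^{(n+1,2m-1)}_{m-2-n}$ translates the equality directly into the recursion \eqref{eq-2-12}. The only bookkeeping wrinkle is the boundary case $n=0$, where $b^{(-1,2m-1)}$ is not defined via Lemma~\ref{Lem-2-4}; this is harmless since its coefficient $n=0$ eliminates the term, which is also consistent with the vanishing convention stated in the lemma. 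I anticipate no real obstacle: the entire argument is one two-line differentiation together with careful index accounting, the one mildly clever step being the decision to iterate $D$ exactly twice (rather than once) so that the three-term pattern of the target recursion materialises on its own.
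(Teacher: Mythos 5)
Your proposal is correct and follows essentially the same route as the paper: differentiate twice to produce the three-term combination $\frac{2n+1}{2}\bigl[n\,u^{2n-1}-(2n+1)u^{2n+1}+(n+1)u^{2n+3}\bigr]$ with $u=\tanh(x/2)$, apply the remaining $2m-1$ derivatives via Lemma~\ref{Lem-2-4}, and evaluate at $x=0$ to identify the surviving $b$-coefficients. Your index bookkeeping ($j$ chosen so that $2\nu+1-l+2j=0$) matches the paper's exactly, so there is nothing to add.
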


\begin{proof}
It follows from~\eqref{eq-2-9} that
\begin{equation}\label{eq3-14}
\left(\frac{d}{dx}\right)^{2m+1}\tanh^{2n+1}(x/2)=\sum_{j=0}^{2m+1}b_j^{(n,2m+1)}
\tanh^{2n-2m+2j}(x/2).
\end{equation}
Differentiating twice and using~\eqref{eq-2-9}, we see that the left-hand side is equal to
\begin{align*}
& \left(\frac{d}{dx}\right)^{2m}\left(\frac{2n+1}{2}\tanh^{2n}(x/2)-\tanh^{2n+2}(x/2)\right)\\
& =\frac{2n+1}{2}\left(\frac{d}{dx}\right)^{2m-1}\bigg\{ n\tanh^{2n-1}(x/2)-(2n+1)\tanh^{2n+1}(x/2)
+(n+1)\tanh^{2n+3}(x/2)\bigg\}\\
& =\frac{2n+1}{2}\bigg\{ n\sum_{j=0}^{2m-1}b_j^{(n-1,2m-1)}\tanh^{2n-2m+2j}(x/2)\\
& \qquad\qquad  -(2n+1)\sum_{j=0}^{2m-1}b_j^{(n,2m-1)}\tanh^{2n-2m+2+2j}(x/2)\\
& \qquad\qquad +(n+1)\sum_{j=0}^{2m-1}b_j^{(n+1,2m-1)}\tanh^{2n-2m+4+2j}(x/2)\bigg\}.
\end{align*}
If we let $x\to 0$, this goes to
\begin{align*}
& \frac{2n+1}{2}\bigg\{ nb_{m-n}^{(n-1,2m-1)}  -(2n+1)b_{m-n-1}^{(n,2m-1)}+(n+1)b_{m-n-2}^{(n+1,2m-1)}\bigg\}\\
& \ =\frac{2n+1}{4}\bigg\{ n\bt_{n-1}^{(m-1)}  -(2n+1)\bt_{n}^{(m-1)}+(n+1)\bt_{n+1}^{(m-1)}\bigg\}.
\end{align*} 
On the other-hand, the right-hand side of equation~\eqref{eq3-14} 
tends to $b_{m-n}^{(n,2m+1)}=\bt_n^{(m)}/2$ as $x\to 0$. Thus we obtain \eqref{eq-2-12}.
\end{proof}

\begin{proof}[Proof of Theorem \ref{Th-2-1}]  For $\{a_i^{(m)}\}$ defined by \eqref{eq-2-5}, 
set $\at_{i}^{(m)}=(2i+1)a_i^{(m)}$. Then \eqref{eq-2-5} can be written as $\at_0^{(0)}=1$ and 
\begin{equation*}
\at_i^{(m)}=\frac{2i+1}{2}\left\{i\at_{i-1}^{(m-1)}-(2i+1)^2\at_i^{(m-1)}+(i+1)\at_{i+1}^{(m-1)}\right\}
\end{equation*}
which has exactly the same form as \eqref{eq-2-12} for $\bt_n^{(m)}$, namely $\at_n^{(m)}=\bt_n^{(m)}$. Comparing \eqref{eq-2-7} and \eqref{eq-2-11}, we obtain $g_m(x)=h_m(x)$. Thus we complete our second
proof of Theorem \ref{Th-2-1}.
\end{proof}

\

\section{Multi-index case}\label{sec-3}

We may define the multi-poly-cosecant numbers ${D}_n^{(\kk)}$ by

\begin{equation*}
\frac{\ath(\kk; \tanh(t/2))}{\sinh t}=\sum_{n=0}^\infty {D}_n^{(\kk)}\frac{t^n}{n!},
\end{equation*}
where the function
\begin{equation*}
   \ath({\kk};z)=2^r\sum_{\substack{0<m_1<\cdots<m_r \\ m_i\equiv i \; \text{mod } 2}} \frac{z^{m_r}}{m_1^{k_1}\cdots m_r^{k_r}}
\end{equation*}
for $k_1, \ldots, k_r \in \mathbb{Z}$ is $2^r$ times $\mathrm{Ath}(\kk;z)$ which was introduced in \cite[\S5]{KT-ASPM}.
(Our $\ath_k(z)$ is $\ath(k;z)$.) 
We can regard $D_n^{(k_1,\ldots,k_r)}$ as a level 2-version of the 
multi-poly-Bernoulli numbers $B_n^{(k_1,\ldots,k_r)}$ and 
$C_n^{(k_1,\ldots,k_r)}$ defined in \cite{IKT2014}.

In \cite{KT-ASPM}, we introduced the function
\[ 
\psi(k_1,\ldots,k_r;s)=\frac{1}{\Gamma(s)}\int_0^\infty t^{s-1}\frac{\ath(k_1,\ldots,k_{r};\tanh (t/2))}{\sinh(t)}\,dt \quad (\Re s>0),
\]
which can be analytically continued to $\C$ as an entire function.  In the same manner as in the ``level 1'' case
($\xi$- and $\eta$-functions reviewed in the same paper), we see that the numbers ${D}_n^{(\kk)}$ appear 
as special values of $ \psi(k_1,\ldots,k_r;s)$ at non-positive integer arguments:
\[ \psi({\kk};-n)=(-1)^n{D}_n^{(\kk)} \quad (n=0,1,2,\ldots). \]

Also, we can obtain a similar recurrence relation for multi-poly-cosecant numbers as
\begin{equation*}
D_{n}^{(k_1,\ldots,k_{r-1},k_r-1)}=\sum_{m=0}^{\lfloor\frac{n}{2}\rfloor}\binom{n+1}{2m+1} D_{n-2m}^{(\kk)}
\end{equation*}
for any $ r\ge 1, k_i\in\Z$  and $n\ge 0 $.


\ 

{\bf Acknowledgements.}\ 
{This work was supported by Japan Society for the Promotion of Science, Grant-in-Aid for Scientific 
Research (S) 16H06336 (M. Kaneko), and (C) 18K03218 (H. Tsumura).}

\

\end{document}